\numberwithin{equation}{section}
\newtheorem{thm}{Theorem}[section]
\newtheorem{lem}[thm]{Lemma}
\newtheorem{pro}[thm]{Proposition}
\newtheorem*{thm*}{Theorem}
\theoremstyle{remark}
\newtheorem{rem}[thm]{Remark}
\theoremstyle{definition}
\DeclareMathOperator{\D}{d\hspace{-0.25ex}}
\newcommand*{\ascr}{\mathscr A}
\newcommand*{\borel}[1]{{\mathfrak B}(#1)}
\newcommand*{\cbb}{\mathbb C}
\newcommand*{\esf}{\mathsf{E}}
\newcommand*{\dz}[1]{{\EuScript D}(#1)}
\newcommand*{\hh}{\mathcal H}
\newcommand*{\hsf}{\mathsf h}
\newcommand*{\Le}{\leqslant}
\newcommand*{\nbb}{\mathbb N}
\newcommand*{\nsf}{\mathsf{N}}
\newcommand*{\pscr}{{\mathscr P}}
\newcommand*{\rbb}{\mathbb R}
\newcommand*{\rbop}{{\overline{\mathbb R}_+}}
\newcommand*{\smalloplus}{\raise0pt
\hbox{$\scriptscriptstyle \oplus$}}
\newcommand*{\zbb}{\mathbb Z}
\begin{document}
   \title[Quasinormal composition
operators in $L^2$-spaces]{A multiplicative property
characterizes quasinormal composition operators in
$L^2$-spaces}
   \author[P.\ Budzy\'{n}ski]{Piotr Budzy\'{n}ski}
   \address{Katedra Zastosowa\'{n} Matematyki,
Uniwersytet Rolniczy w Krakowie, ul.\ Balicka 253c,
PL-30198 Krak\'ow, Poland}
   \email{piotr.budzynski@ur.krakow.pl}
   \author[Z.\ J.\ Jab{\l}o\'nski]{Zenon Jan
Jab{\l}o\'nski}
   \address{Instytut Matematyki,
Uniwersytet Jagiello\'nski, ul.\ \L ojasiewicza 6,
PL-30348 Kra\-k\'ow, Poland}
   \email{Zenon.Jablonski@im.uj.edu.pl}
   \author[I.\ B.\ Jung]{Il Bong Jung}
   \address{Department of Mathematics, Kyungpook National
University, Daegu 702-701, Korea}
   \email{ibjung@knu.ac.kr}
   \author[J.\ Stochel]{Jan Stochel}
\address{Instytut Matematyki, Uniwersytet
Jagiello\'nski, ul.\ \L ojasiewicza 6, PL-30348
Kra\-k\'ow, Poland}
   \email{Jan.Stochel@im.uj.edu.pl}
   \thanks{The work of the first, the second
and the fourth authors was supported by the MNiSzW
(Ministry of Science and Higher Education) grant NN201
546438 (2010-2013). The third author was supported by
Basic Science Research Program through the National
Research Foundation of Korea (NRF) funded by the
Ministry of Education, Science and Technology
(2012-008590).}
    \subjclass[2010]{Primary 47B33, 47B20}
\keywords{Composition operator, quasinormal operator}
   \begin{abstract}
A densely defined composition operator in an
$L^2$-space induced by a measurable transformation
$\phi$ is shown to be quasinormal if and only if the
Radon-Nikodym derivatives $\hsf_{\phi^n}$ attached to
powers $\phi^n$ of $\phi$ have the multiplicative
property:\ $\hsf_{\phi^n} = \hsf_{\phi}^n$ almost
everywhere for $n = 0, 1, 2, \ldots$.
   \end{abstract}
   \maketitle
   \section{Introduction}
Composition operators (in $L^2$-spaces over
$\sigma$-finite measure spaces) play an essential role
in ergodic theory. They are also interesting objects
of operator theory. The foundations of the theory of
bounded composition operators are well-developed. In
particular, the questions of their boundedness,
normality, quasinormality, subnormality, seminormality
etc.\ were answered (see e.g.,
\cite{sin,nor,wh,ha-wh,lam1,lam2,di-ca,emb-lam3,sin-man,bu-ju-la}
for the general approach and
\cite{emb-lam2,ml,sto,da-st,2xSt} for special classes
of operators; see also the monograph \cite{sin-man}).

As opposed to the bounded case, the theory of
unbounded composition operators is at a rather early
stage of development. There are few papers concerning
this issue. Some basic facts about unbounded
composition operators can be found in
\cite{ca-hor,jab,b-j-j-sC}. In a recent paper
\cite{b-j-j-sD}, we gave the first ever criterion for
subnormality of unbounded densely defined composition
operators, which states that if such an operator
admits a measurable family of probability measures
that satisfy the consistency condition (see
\eqref{consist10.6}), then it is subnormal (cf.\
\cite[Theorem 3.5]{b-j-j-sD}). The aforesaid criterion
becomes a full characterization of subnormality in the
bounded case. Recall that the celebrated Lambert's
characterization of subnormality of bounded
composition operators (cf.\ \cite{lam1}) is no longer
true for unbounded ones (see \cite[Theorem
4.3.3]{j-j-s0} and \cite[Conclusion 10.5]{b-j-j-sC}).
It turns out that the consistency condition is
strongly related to quasinormality.

Quasinormal operators, which were introduced by A.
Brown in \cite{bro}, form a class of operators which
is properly larger than that of normal operators, and
properly smaller than that of subnormal operators (see
\cite[Theorem 1]{bro} and \cite[Theorem 2]{StSz2}). It
was A. Lambert who noticed that if $C_{\phi}$ is a
bounded quasinormal composition operator with a
surjective symbol $\phi$, then the Radon-Nikodym
derivatives $\hsf_{\phi^n}$, $n = 0, 1, 2, \ldots$,
(see \eqref{hfi}) have the following multiplicative
property (cf.\ \cite[p.\ 752]{lam1}):
   \begin{align*}
\hsf_{\phi^n} = \hsf_{\phi}^n \;\; \text{almost
everywhere for $n = 0, 1, 2, \ldots$.}
   \end{align*}
The aim of this article is to show that the above
completely characterizes quasinormal composition
operators regardless of whether they are bounded or
not, and regardless of whether $\phi$ is surjective or
not (cf.\ Theorem \ref{chquas}). The proof of this
characterization depends on the fact that a
quasinormal composition operator always admits a
special measurable family of probability measures
which satisfy the consistency condition
\eqref{consist10.6}. This leads to yet another
characterization of quasinormality (see condition
(iii) of Theorem \ref{chquas}).
   \section{Preliminaries}
We write $\cbb$ for the field of all complex numbers
and denote by $\rbb_+$, $\zbb_+$ and $\nbb$ the sets
of nonnegative real numbers, nonnegative integers and
positive integers, respectively. Set $\rbop = \rbb_+
\cup \{\infty\}$. Given a sequence
$\{\varDelta_n\}_{n=1}^\infty$ of sets and a set
$\varDelta$ such that $\varDelta_n \subseteq
\varDelta_{n+1}$ for every $n\in \nbb$, and $\varDelta
= \bigcup_{n=1}^\infty \varDelta_n$, we write
$\varDelta_n \nearrow \varDelta$ (as $n\to \infty$).
The characteristic function of a set $\varDelta$ is
denoted by $\chi_\varDelta$ (it is clear from the
context on which set the function $\chi_\varDelta$ is
defined).

The following lemma is a direct consequence of
\cite[Proposition I-6-1]{Nev} and \cite[Theorem
1.3.10]{Ash}. It will be used in the proof of Theorem
\ref{chquas}.
   \begin{lem} \label{2miary}
Let $\pscr$ be a semi-algebra of subsets of a set $X$
and $\rho_1, \rho_2$ be finite measures\footnote{\;All
measures considered in this paper are assumed to be
positive.} defined on the $\sigma$-algebra generated
by $\pscr$ such that $\rho_1(\varDelta) =
\rho_2(\varDelta)$ for all $\varDelta \in \pscr$. Then
$\rho_1 = \rho_2$.
   \end{lem}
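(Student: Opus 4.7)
The plan is to reduce the claim to the well-known uniqueness theorem for finite measures on an algebra, by first passing from the semi-algebra $\pscr$ to the algebra it generates.

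First I would recall the standard fact (essentially Proposition I-6-1 in Neveu) that the collection
\[
\ascr \;=\; \Big\{\bigcup_{k=1}^{n} \varDelta_k : n \in \nbb,\; \varDelta_1,\ldots,\varDelta_n \in \pscr \text{ pairwise disjoint}\Big\}
\]
is an algebra of subsets of $X$, and that every member of $\ascr$ admits such a finite disjoint representation in terms of elements of $\pscr$. Since $\rho_1$ and $\rho_2$ are (countably, hence finitely) additive and coincide on $\pscr$ by assumption, this representation immediately forces $\rho_1(A) = \rho_2(A)$ for every $A \in \ascr$.

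Second, I would note that the $\sigma$-algebra generated by $\ascr$ coincides with the one generated by $\pscr$, so $\rho_1$ and $\rho_2$ are finite measures on $\sigma(\ascr)$ agreeing on the generating algebra $\ascr$. The classical uniqueness of extension theorem for finite measures (Ash, Theorem 1.3.10; equivalently Dynkin's $\pi$-$\lambda$ theorem applied to the $\lambda$-system $\{A \in \sigma(\pscr) : \rho_1(A) = \rho_2(A)\}$) then yields $\rho_1 = \rho_2$ on the whole of $\sigma(\pscr)$.

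Since both ingredients are standard and cited directly in the text, there is no real obstacle; the proof amounts to combining these two quoted results in the obvious order, and the only point that requires any verification is the disjoint-union description of $\ascr$, which is routine.
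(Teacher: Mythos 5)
Your proposal is correct and follows exactly the route the paper indicates: it passes from the semi-algebra $\pscr$ to the algebra of finite disjoint unions via Neveu's Proposition I-6-1, and then invokes the uniqueness-of-extension theorem (Ash, Theorem 1.3.10) on that generating algebra. The paper itself gives no further detail, describing the lemma as a direct consequence of these two cited results, so there is nothing to add.
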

Let $A$ be a linear operator in a complex Hilbert
space $\hh$. Denote by $\dz{A}$ and $A^*$ the domain
and the adjoint of $A$ (in case it exists). If $A$ is
closed and densely defined, then $A$ has a (unique)
{\em polar decomposition} $A=U|A|$, where $U$ is a
partial isometry on $\hh$ such that the kernels of $U$
and $A$ coincide and $|A|$ is the square root of
$A^*A$ (cf.\ \cite[Section 8.1]{b-s}). A densely
defined linear operator $A$ in $\hh$ is said to be
{\em quasinormal} if $A$ is closed and $U |A|
\subseteq |A|U$, where $A=U|A|$ is the polar
decomposition of $A$. We refer the reader to
\cite{bro} and \cite{StSz2} for basic information on
bounded and unbounded quasinormal operators,
respectively.

Throughout the paper $(X,\ascr, \mu)$ will denote a
$\sigma$-finite measure space. We shall abbreviate the
expressions ``almost everywhere with respect to
$\mu$'' and ``for $\mu$-almost every $x$'' to ``a.e.\
$[\mu]$'' and ``for $\mu$-a.e.\ $x$'', respectively.
As usual, $L^2(\mu)=L^2(X,\ascr, \mu)$ denotes the
Hilbert space of all square integrable complex
functions on $X$ with the standard inner product. Let
$\phi\colon X \to X$ be an $\ascr$-{\em measurable}
transformation of $X$, i.e., $\phi^{-1}(\varDelta) \in
\ascr$ for all $\varDelta \in \ascr$. Denote by
$\mu\circ \phi^{-1}$ the measure on $\ascr$ given by
$\mu\circ \phi^{-1}(\varDelta) =
\mu(\phi^{-1}(\varDelta))$ for $\varDelta \in \ascr$.
We say that $\phi$ is {\em nonsingular} if $\mu\circ
\phi^{-1}$ is absolutely continuous with respect to
$\mu$. If $\phi$ is a nonsingular transformation of
$X$, then the map $C_\phi\colon L^2(\mu) \supseteq
\dz{C_\phi}\to L^2(\mu)$ given by
   \begin{align*}
\dz{C_\phi} = \{f \in L^2(\mu) \colon f \circ \phi \in
L^2(\mu)\} \text{ and } C_\phi f = f \circ \phi \text{
for } f \in \dz{C_\phi},
   \end{align*}
is well-defined (and vice versa). Call such $C_\phi$ a
{\em composition operator}. Note that every
composition operator is closed (see e.g.,
\cite[Proposition 3.2]{b-j-j-sC}). If $\phi$ is
nonsingular, then by the Radon-Nikodym theorem there
exists a unique (up to sets of measure zero)
$\ascr$-measurable function $\mathsf \hsf_\phi \colon
X \to \rbop$ such that
   \begin{align} \label{hfi}
\mu\circ \phi^{-1}(\varDelta) = \int_\varDelta
\hsf_\phi \D\mu, \quad \varDelta \in \ascr.
   \end{align}
It is well-known that $C_{\phi}$ is densely defined if
and only if $\hsf_{\phi} < \infty$ a.e.\ $[\mu]$ (cf.\
\cite[Lemma 6.1]{ca-hor}), and
$\dz{C_{\phi}}=L^2(\mu)$ if and only if $\hsf_{\phi}
\in L^\infty(\mu)$ (cf.\ \cite[Theorem 1]{nor}). Given
$n \in \nbb$, we denote by $\phi^n$ the $n$-fold
composition of $\phi$ with itself; $\phi^0$ is the
identity transformation of $X$. Note that if $\phi$ is
nonsingular and $n\in \zbb_+$, then $\phi^n$ is
nonsingular and thus $\hsf_{\phi^n}$ makes sense.
Clearly $\hsf_{\phi^0}=1$ a.e.\ $[\mu]$.

Suppose that $\phi\colon X \to X$ is a nonsingular
transformation such that $\hsf_\phi < \infty$ a.e.\
$[\mu]$. Then the measure $\mu|_{\phi^{-1}(\ascr)}$ is
$\sigma$-finite (cf.\ \cite[Proposition
3.2]{b-j-j-sC}). Hence, by the Radon-Nikodym theorem,
for every $\ascr$-measurable function $f\colon X \to
\rbop$ there exists a unique (up to sets of measure
zero) $\phi^{-1}(\ascr)$-measurable function
$\esf(f)\colon X \to \rbop$ such that
   \begin{align} \label{przegad}
\int_{\phi^{-1}(\varDelta)} f \D\mu =
\int_{\phi^{-1}(\varDelta)} \esf(f) \D\mu, \quad
\varDelta \in \ascr.
   \end{align}
We call $\esf(f)$ the {\em conditional expectation} of
$f$ with respect to $\phi^{-1}(\ascr)$ (see
\cite{b-j-j-sC} for recent applications of the
conditional expectation in the theory of unbounded
composition operators; see also \cite{Rao} for the
foundations of the theory of probabilistic conditional
expectation). It is well-known that
   \begin{align} \label{CE-2}
\text{if $0\Le f_n \nearrow f$ and $f,f_n$ are
$\ascr$-measurable, then $\esf(f_n) \nearrow
\esf(f)$,}
   \end{align}
where $g_n \nearrow g$ means that for $\mu$-a.e.\
$x\in X$, the sequence $\{g_n(x)\}_{n=1}^\infty$ is
monotonically increasing and convergent to $g(x)$.

Now we state three results, each of which will be used
in the proof of Theorem \ref{chquas}. The first one
provides a necessary and sufficient condition for the
Radon-Nikodym derivatives $\hsf_{\phi^n}$, $n\in
\nbb$, to have the following semigroup property.
      \begin{lem}[\mbox{\cite[Lemma 9.1]{b-j-j-sC}}] \label{hf2}
If $\phi$ is a nonsingular transformation of $X$ such
that $\hsf_{\phi} < \infty$ a.e.\ $[\mu]$ and $n\in
\nbb$, then the following two conditions are
equivalent{\em :}
   \begin{enumerate}
   \item[(i)] $\hsf_{\phi^{n+1}} = \hsf_{\phi^n} \cdot \hsf_\phi$ a.e.\
$[\mu]$,
   \item[(ii)] $\esf(\hsf_{\phi^n}) = \hsf_{\phi^n}
\circ \phi$ a.e.\ $[\mu|_{\phi^{-1}(\ascr)}]$.
   \end{enumerate}
   \end{lem}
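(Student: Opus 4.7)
The plan is to turn both (i) and (ii) into equalities of integrals over sets of the form $\phi^{-1}(\Delta)$, $\Delta \in \ascr$, and then to observe that the two resulting identities are literally the same. The main tool is the change-of-variable identity
\begin{equation*}
\int_X g \circ \phi \D\mu = \int_X g \cdot \hsf_\phi \D\mu
\end{equation*}
valid for every $\ascr$-measurable $g \colon X \to \rbop$. This is a restatement of \eqref{hfi} for $g = \chi_\varDelta$, and extends to simple $g \Ge 0$ by linearity and to arbitrary $\ascr$-measurable $g \Ge 0$ by monotone convergence.

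Applying this identity to $g = \chi_\varDelta \cdot \hsf_{\phi^n}$, where $\varDelta \in \ascr$ is arbitrary, I would obtain
\begin{equation*}
\int_{\phi^{-1}(\varDelta)} \hsf_{\phi^n} \circ \phi \D\mu = \int_\varDelta \hsf_{\phi^n} \cdot \hsf_\phi \D\mu.
\end{equation*}
On the other hand, applying \eqref{hfi} to $\phi^n$ with $\phi^{-1}(\varDelta)$ in place of $\varDelta$, then using $\phi^{-n}(\phi^{-1}(\varDelta)) = \phi^{-(n+1)}(\varDelta)$ and \eqref{hfi} for $\phi^{n+1}$, I would get
\begin{equation*}
\int_{\phi^{-1}(\varDelta)} \hsf_{\phi^n} \D\mu \;=\; \mu\bigl(\phi^{-(n+1)}(\varDelta)\bigr) \;=\; \int_\varDelta \hsf_{\phi^{n+1}} \D\mu.
\end{equation*}
The defining property \eqref{przegad} of conditional expectation, taken with $f = \hsf_{\phi^n}$, then yields
\begin{equation*}
\int_{\phi^{-1}(\varDelta)} \esf(\hsf_{\phi^n}) \D\mu = \int_\varDelta \hsf_{\phi^{n+1}} \D\mu, \quad \varDelta \in \ascr.
\end{equation*}

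To finish, I would use $\sigma$-finiteness of $\mu$ to recast (i) as the family of equalities $\int_\varDelta \hsf_{\phi^{n+1}} \D\mu = \int_\varDelta \hsf_{\phi^n} \cdot \hsf_\phi \D\mu$ for every $\varDelta \in \ascr$. Combining this with the two displays above turns (i) into
\begin{equation*}
\int_{\phi^{-1}(\varDelta)} \esf(\hsf_{\phi^n}) \D\mu = \int_{\phi^{-1}(\varDelta)} \hsf_{\phi^n} \circ \phi \D\mu, \quad \varDelta \in \ascr.
\end{equation*}
Since $\{\phi^{-1}(\varDelta)\colon \varDelta \in \ascr\} = \phi^{-1}(\ascr)$, both integrands are $\phi^{-1}(\ascr)$-measurable, and $\mu|_{\phi^{-1}(\ascr)}$ is $\sigma$-finite (as recalled in the excerpt), this is in turn equivalent to (ii). I do not anticipate a serious obstacle; the only delicate bookkeeping is to justify the transfer formula for $[0,\infty]$-valued $g$ (monotone convergence from simple functions) and to invoke $\sigma$-finiteness of $\mu|_{\phi^{-1}(\ascr)}$ when passing from equality of integrals on $\phi^{-1}(\ascr)$ to a.e.\ equality.
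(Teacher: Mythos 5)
The paper does not prove this lemma; it imports it verbatim from \cite[Lemma 9.1]{b-j-j-sC}, so there is no in-text argument to compare against. Your proposal is correct and is the natural argument: the change-of-variable identity $\int_X g\circ\phi\,\D\mu=\int_X g\,\hsf_\phi\,\D\mu$, the identity $\mu(\phi^{-(n+1)}(\varDelta))=\int_\varDelta\hsf_{\phi^{n+1}}\D\mu$, and the defining property \eqref{przegad} together show that both (i) and (ii) are equivalent to the single family of equalities $\int_\varDelta\hsf_{\phi^{n+1}}\D\mu=\int_\varDelta\hsf_{\phi^n}\hsf_\phi\,\D\mu$, $\varDelta\in\ascr$. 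The only point worth making explicit is that the functions involved are $\rbop$-valued (only $\hsf_\phi<\infty$ a.e.\ is assumed, not $\hsf_{\phi^n}<\infty$ a.e.), so the step from equality of integrals over all measurable sets to a.e.\ equality must be the version valid for nonnegative extended-real-valued functions on a $\sigma$-finite space; this does hold, and you correctly flag $\sigma$-finiteness of both $\mu$ and $\mu|_{\phi^{-1}(\ascr)}$ as the ingredients needed.
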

The second result is a basic description of
quasinormal composition operators.
   \begin{pro}[\mbox{\cite[Proposition 8.1]{b-j-j-sC}}]
\label{quasi} If $\phi\colon X \to X$ is nonsingular
and $C_{\phi}$ is densely defined, then $C_{\phi}$ is
quasinormal if and only if $\hsf_\phi = \hsf_\phi
\circ \phi$ a.e.\ $[\mu]$.
   \end{pro}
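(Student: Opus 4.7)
My plan is to reduce quasinormality of $C_\phi$ to the stated identity by combining two ingredients: an explicit formula for $C_\phi^*C_\phi$ and the unbounded version of Brown's quasinormality criterion from \cite{StSz2}. The change-of-variables formula gives $\|C_\phi f\|^2 = \int_X |f|^2\hsf_\phi\D\mu$ for every $f\in\dz{C_\phi}$, which identifies $C_\phi^*C_\phi$ with the multiplication operator $M_{\hsf_\phi}$ on its natural maximal domain, so $|C_\phi|=M_{\sqrt{\hsf_\phi}}$; the spectral measure $E$ of $|C_\phi|$ is then multiplication by characteristic functions of level sets $\{\hsf_\phi\in B\}$, $B\in\borel{\rbop}$. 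By \cite{StSz2}, a closed densely defined operator $A=U|A|$ is quasinormal if and only if $A\,(A^*A)\subseteq(A^*A)\,A$, equivalently, if and only if every spectral projection of $|A|$ commutes with $A$.

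For necessity, assume $C_\phi$ is quasinormal, so $C_\phi M_{\hsf_\phi}\subseteq M_{\hsf_\phi}C_\phi$. I would apply this inclusion to $f=\chi_\varDelta$ with $\varDelta\in\ascr$, $\mu(\varDelta)<\infty$ and $\hsf_\phi\Le n$ on $\varDelta$, for which both sides are readily checked to be defined (note $\mu(\phi^{-1}(\varDelta))\Le n\mu(\varDelta)<\infty$). Equating them gives
\begin{equation*}
(\hsf_\phi\circ\phi)\,\chi_{\phi^{-1}(\varDelta)} \;=\; \hsf_\phi\,\chi_{\phi^{-1}(\varDelta)} \quad \text{a.e. } [\mu].
\end{equation*}
Taking $\varDelta_n=\{\hsf_\phi\Le n\}\cap E_n$ with $E_n\nearrow X$ and $\mu(E_n)<\infty$, one has $\phi^{-1}(\varDelta_n)\nearrow X$, and letting $n\to\infty$ yields $\hsf_\phi=\hsf_\phi\circ\phi$ a.e.\ $[\mu]$.

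For sufficiency, assume $\hsf_\phi=\hsf_\phi\circ\phi$ a.e. Then $\chi_{\{\hsf_\phi\in B\}}\circ\phi=\chi_{\{\hsf_\phi\in B\}}$ a.e.\ for every $B\in\borel{\rbop}$. For $f\in\dz{C_\phi}$, the function $\chi_{\{\hsf_\phi\in B\}}f$ again lies in $\dz{C_\phi}$ since $|(\chi_{\{\hsf_\phi\in B\}}f)\circ\phi|\Le|f\circ\phi|$, and
\begin{equation*}
C_\phi E(B)f \;=\; (\chi_{\{\hsf_\phi\in B\}}\circ\phi)(f\circ\phi) \;=\; \chi_{\{\hsf_\phi\in B\}}(f\circ\phi) \;=\; E(B)\,C_\phi f.
\end{equation*}
Thus $E(B)C_\phi\subseteq C_\phi E(B)$ for every $B\in\borel{\rbop}$, whence by \cite{StSz2} $C_\phi$ is quasinormal.

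The principal difficulty is not any single algebraic step but the careful tracking of unbounded domains. On the necessity side, the inclusion $C_\phi M_{\hsf_\phi}\subseteq M_{\hsf_\phi}C_\phi$ can only be evaluated on functions one knows lie in the smaller domain $\dz{C_\phi M_{\hsf_\phi}}$, which is why the level-set cutoffs $\{\hsf_\phi\Le n\}$ are essential. On the sufficiency side, one must upgrade the pointwise identity $\hsf_\phi\circ\phi=\hsf_\phi$ to a genuine commutation of the unbounded $C_\phi$ with each spectral projection of $|C_\phi|$. Once the identification $C_\phi^*C_\phi=M_{\hsf_\phi}$ and the characterisation of quasinormality from \cite{StSz2} are in hand, the rest is a bookkeeping exercise with compositions and level sets.
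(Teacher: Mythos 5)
The paper does not actually prove this proposition: it is imported wholesale from \cite[Proposition 8.1]{b-j-j-sC}, so there is no in-paper argument to measure yours against. Judged on its own, your proof is correct, and it follows the only natural route: the measure-transport identity $\|C_\phi f\|^2=\int_X|f|^2\hsf_\phi\,\D\mu$ gives $\dz{C_\phi}=\dz{M_{\sqrt{\hsf_\phi}}}$ and $|C_\phi|=M_{\sqrt{\hsf_\phi}}$, after which quasinormality is read off from the commutation of $C_\phi$ with the spectral projections $E(B)=M_{\chi_{\{\sqrt{\hsf_\phi}\in B\}}}$; your sufficiency half is clean exactly as written. Two remarks on the necessity half. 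First, you assert as a biconditional that quasinormality of a closed densely defined $A$ is equivalent to the inclusion $A(A^*A)\subseteq(A^*A)A$; for unbounded operators only the forward implication is safe to invoke (the domain of $A(A^*A)$ may be too small for the bare inclusion to carry information), and that implication is not what \cite{StSz2} literally supplies --- what is available there is the equivalence of $U|A|\subseteq|A|U$ with $E(B)A\subseteq AE(B)$ for all Borel $B$. Passing from the projections to $M_{\hsf_\phi}$ requires a short truncation argument with the bounded functions $h_n(t)=t^2\chi_{[0,\sqrt n]}(t)$ of $|C_\phi|$ and the closedness of $C_\phi$. Second, you can sidestep this entirely: apply $E(B)C_\phi\subseteq C_\phi E(B)$ directly to $f=\chi_\varDelta$ with $\mu(\varDelta)<\infty$ and $\hsf_\phi\Le n$ on $\varDelta$, which yields $\chi_B\circ\sqrt{\hsf_\phi}\circ\phi=\chi_B\circ\sqrt{\hsf_\phi}$ a.e.\ $[\mu]$ on $\phi^{-1}(\varDelta)$, hence (letting $\varDelta\nearrow X$ and varying $B$) $\hsf_\phi\circ\phi=\hsf_\phi$ a.e.\ $[\mu]$; this makes the necessity argument exactly dual to your sufficiency argument and removes the only soft spot in the write-up.
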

Before formulating the third result, we introduce the
necessary terminology. We say that a map $P\colon X
\times \borel{\rbb_+} \to [0,1]$, where
$\borel{\rbb_+}$ is the $\sigma$-algebra of all Borel
subsets of $\rbb_+$, is an {\em $\ascr$-measurable
family of probability measures} if the set-function
$P(x,\cdot)$ is a Borel probability measure on
$\rbb_+$ for every $x \in X$, and the function
$P(\cdot,\sigma)$ is $\ascr$-measurable for every
$\sigma \in \borel{\rbb_+}$. Let $\phi$ be a
nonsingular transformation of $X$ such that
$\hsf_{\phi} < \infty$ a.e.\ $[\mu]$. An
$\ascr$-measurable family $P\colon X \times
\borel{\rbb_+} \to [0,1]$ of probability measures is
said to satisfy the {\em consistency condition} (cf.\
\cite{b-j-j-sD}) if
   \begin{align}  \tag{CC} \label{consist10.6}
\esf(P(\cdot, \sigma)) (x) = \frac{\int_{\sigma} t
P(\phi(x),\D t)}{\hsf_\phi(\phi(x))} \text{ for
$\mu$-a.e.\ $x \in X$ and every $\sigma \in
\borel{\rbb_+}$.}
   \end{align}
As shown in \cite[Proposition 3.6]{b-j-j-sD}, each
quasinormal composition operator $C_{\phi}$ has an
$\ascr$-measurable family $P\colon X \times
\borel{\rbb_+} \to [0,1]$ of probability measures
which satisfies the consistency condition
\eqref{consist10.6}. In fact, such $P$ can always be
chosen to be $\phi^{-1}(\ascr)$-measurable. As already
mentioned, the consistency condition
\eqref{consist10.6} leads to a criterion for
subnormality of unbounded composition operators (cf.\
\cite[Theorem 3.5]{b-j-j-sD}).

The third result relates moments of an
$\ascr$-measurable family $P$ of probability measures
satisfying \eqref{consist10.6} to Radon-Nikodym
derivatives $\hsf_{\phi^n}$, $n\in \zbb_+$.
   \begin{thm}[\mbox{\cite[Theorem 5.4]{b-j-j-sD}}] \label{sms}
Let $\phi$ be a nonsingular transformation of $X$ such
that $0 < \hsf_{\phi} < \infty$ a.e.\ $[\mu]$, and
$P\colon X \times \borel{\rbb_+} \to [0,1]$ be an
$\ascr$-measurable family of probability measures
which satisfies \eqref{consist10.6}. Then
   \begin{align*}
\hsf_{\phi^n}(x)=\int_{\rbb_+} t^n P(x,\D t) \text{
for $\mu$-a.e.\ $x \in X$ and for every $n \in
\zbb_+$.}
   \end{align*}
   \end{thm}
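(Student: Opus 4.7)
The plan is to induct on $n\in\zbb_+$ on the assertion
$$m_n(x) := \int_{\rbb_+} t^n\, P(x,\D t) = \hsf_{\phi^n}(x) \text{ for $\mu$-a.e.\ $x\in X$.}$$
The base case $n=0$ is immediate because $P(x,\cdot)$ is a probability measure and $\hsf_{\phi^0}=1$ a.e.\ $[\mu]$. The inductive step will rest on a transfer identity extracted from \eqref{consist10.6} and on a routine Radon-Nikodym bookkeeping identity.

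The transfer identity I have in mind is
$$\int_{\phi^{-1}(\varDelta)}\int_{\rbb_+} f(t)\, P(x,\D t)\,\D\mu(x) = \int_\varDelta \int_{\rbb_+} t\,f(t)\, P(y,\D t)\,\D\mu(y), \quad (\dagger)$$
valid for every $\varDelta\in\ascr$ and every non-negative $\borel{\rbb_+}$-measurable $f$. For $f=\chi_\sigma$ I would rewrite the left-hand side as $\int_{\phi^{-1}(\varDelta)} \esf(P(\cdot,\sigma))\,\D\mu$ via \eqref{przegad}, substitute \eqref{consist10.6}, and then apply the standard change of variables $\int_{\phi^{-1}(\varDelta)} h\circ\phi\,\D\mu = \int_\varDelta h\,\hsf_\phi\,\D\mu$ with $h(y)=\hsf_\phi(y)^{-1}\int_\sigma t\,P(y,\D t)$ (well-defined a.e.\ since $\hsf_\phi>0$), so that the factor $\hsf_\phi$ cancels and the right-hand side of $(\dagger)$ emerges. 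Linearity and monotone convergence extend the identity to all non-negative Borel $f$, and in particular ensure that each $m_n$ is $\ascr$-measurable.

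A separate Radon-Nikodym bookkeeping step yields
$$\int_{\phi^{-1}(\varDelta)}\hsf_{\phi^n}\,\D\mu = \mu\circ\phi^{-(n+1)}(\varDelta) = \int_\varDelta \hsf_{\phi^{n+1}}\,\D\mu, \quad \varDelta\in\ascr,$$
as one sees from $\int h\,\hsf_{\phi^n}\,\D\mu = \int h\circ\phi^n\,\D\mu$ applied to $h = \chi_\varDelta\circ\phi$, together with \eqref{hfi} for $\phi^{n+1}$. Under the inductive hypothesis $m_n=\hsf_{\phi^n}$ a.e., choosing $f(t)=t^n$ in $(\dagger)$ and chaining the identities gives
$$\int_\varDelta m_{n+1}\,\D\mu = \int_{\phi^{-1}(\varDelta)} m_n\,\D\mu = \int_{\phi^{-1}(\varDelta)} \hsf_{\phi^n}\,\D\mu = \int_\varDelta \hsf_{\phi^{n+1}}\,\D\mu$$
for every $\varDelta\in\ascr$; $\sigma$-finiteness of $\mu$ then forces $m_{n+1}=\hsf_{\phi^{n+1}}$ a.e.\ $[\mu]$, closing the induction.

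The main obstacle, as I see it, is establishing $(\dagger)$ cleanly — specifically, handling the a.e.\ division by $\hsf_\phi\circ\phi$ in \eqref{consist10.6} (this is precisely why the hypothesis $\hsf_\phi>0$ a.e.\ is imposed) and orchestrating the cancellation of $\hsf_\phi$ under the change of variables. Once $(\dagger)$ is in hand, the induction reduces to the mechanical chaining of equalities above.
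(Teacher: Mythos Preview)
The paper does not prove this theorem: it is quoted verbatim from \cite[Theorem 5.4]{b-j-j-sD} and used as a black box in the proof of Theorem \ref{chquas}, so there is no in-paper argument to compare against. That said, your proposal is correct and is the natural proof. The base case is trivial, the transfer identity $(\dagger)$ follows exactly as you describe (the only delicate point, that $\hsf_\phi\circ\phi>0$ a.e.\ $[\mu]$ so that the quotient in \eqref{consist10.6} makes sense, is handled by the computation $\mu(\phi^{-1}(\{\hsf_\phi=0\}))=\int_{\{\hsf_\phi=0\}}\hsf_\phi\,\D\mu=0$, which the paper itself records in the proof of (i)$\Rightarrow$(v)), and the bookkeeping identity $\int_{\phi^{-1}(\varDelta)}\hsf_{\phi^n}\,\D\mu=\int_\varDelta\hsf_{\phi^{n+1}}\,\D\mu$ is just the measure-transport theorem applied twice. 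The induction then closes by $\sigma$-finiteness exactly as you say.
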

   \section{The characterization}
In this section we provide the main characterization
of quasinormal composition operators (see (v) below).
We begin by noting that if $C_{\phi}$ is quasinormal,
then the $\ascr$-measurable family $P\colon X \times
\borel{\rbb_+} \to [0,1]$ of probability measures
given by $P(x,\sigma) = \chi_{\sigma}(\hsf_{\phi}(x))$
for $x\in X$ and $\sigma \in \borel{\rbb_+}$ satisfies
the consistency condition \eqref{consist10.6} (of
course, under the assumption that $\hsf_{\phi}$ is
finite). The consistency condition written for this
particular $P$ appears in (iii) below. It is an
essential component of the proof of the
characterization.

From now on, we adhere to the convention $\infty^0=1$.
   \begin{thm} \label{chquas}
Let $\phi$ be a nonsingular transformation of $X$ such
that $C_{\phi}$ is densely defined. Then the following
six conditions are equivalent\,\footnote{\;Since
$\hsf_{\phi} < \infty$ a.e.\ $[\mu]$, the expressions
$f\circ \hsf_{\phi}$ and $f\circ \hsf_{\phi}\circ
\phi$ appearing in (iv) are defined a.e.\ $[\mu]$. To
overcome this disadvantage, one can simply set
$f(\infty)=0$.}{\em :}
   \begin{enumerate}
   \item[(i)] $C_{\phi}$ is quasinormal,
   \item[(ii)] $\chi_{\sigma} \circ \hsf_{\phi} \circ \phi
\cdot \chi_{\sigma} \circ \hsf_{\phi} = \chi_{\sigma}
\circ \hsf_{\phi} \circ \phi$ a.e.\ $[\mu]$ for every
$\sigma \in \borel{\rbb_+}$,
   \item[(iii)] $\esf(\chi_{\sigma} \circ \hsf_{\phi})
= \chi_{\sigma} \circ \hsf_{\phi} \circ \phi$ a.e.\
$[\mu]$ for every $\sigma \in \borel{\rbb_+}$,
   \item[(iv)] $\esf(f \circ \hsf_{\phi})
= f \circ \hsf_{\phi} \circ \phi$ a.e.\ $[\mu]$ for
every Borel function $f\colon \rbb_+ \to \rbop$,
   \item[(v)] $\hsf_{\phi^n} = \hsf_{\phi}^n$
a.e.\ $[\mu]$ for every $n\in \zbb_+$,
   \item[(vi)] $\esf(\hsf_{\phi}) = \hsf_{\phi}
\circ \phi$ a.e.\ $[\mu]$ and $\esf(\hsf_{\phi^n}) =
\esf(\hsf_{\phi})^n$ a.e.\ $[\mu]$ for every $n\in
\zbb_+$.
   \end{enumerate}

   \end{thm}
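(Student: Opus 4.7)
The pivot is Proposition \ref{quasi}, which identifies (i) with the pointwise a.e.\ identity $\hsf_\phi = \hsf_\phi \circ \phi$; the plan is to show each of (ii)--(vi) is equivalent to this single identity. Assuming (i), for any Borel $f\colon \rbb_+ \to \rbop$ the a.e.\ equality $f \circ \hsf_\phi = f \circ \hsf_\phi \circ \phi$ has a $\phi^{-1}(\ascr)$-measurable right-hand side, so $\esf(f \circ \hsf_\phi) = f \circ \hsf_\phi \circ \phi$; this is (iv), whose specialization $f = \chi_\sigma$ is (iii), and squaring the $\{0,1\}$-valued identity $\chi_\sigma \circ \hsf_\phi = \chi_\sigma \circ \hsf_\phi \circ \phi$ gives (ii). For (v), I would induct on $n$ using Lemma \ref{hf2}: from $\hsf_{\phi^n} = \hsf_\phi^n$ a.e.\ one gets $\hsf_{\phi^n} \circ \phi = (\hsf_\phi \circ \phi)^n = \hsf_\phi^n = \hsf_{\phi^n}$, so $\esf(\hsf_{\phi^n}) = \hsf_{\phi^n} \circ \phi$, and Lemma \ref{hf2} yields $\hsf_{\phi^{n+1}} = \hsf_{\phi^n}\hsf_\phi = \hsf_\phi^{n+1}$. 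Condition (vi) then follows via $\esf(\hsf_{\phi^n}) = \hsf_{\phi^n} \circ \phi = (\hsf_\phi \circ \phi)^n = \esf(\hsf_\phi)^n$.

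For the converses involving (ii)--(iv): (iv)$\Rightarrow$(iii) is specialization. For (iii)$\Rightarrow$(i), set $g := \chi_\sigma \circ \hsf_\phi$; since $g^2 = g$, (iii) gives $\esf(g^2) = \esf(g) = g \circ \phi \in \{0,1\}$, so $\esf(g^2) = \esf(g)^2$. The conditional-variance identity
\[
\esf\bigl((g - \esf g)^2\bigr) = \esf(g^2) - (\esf g)^2,
\]
obtained by pointwise expansion and the pull-out rule $\esf(g \cdot \esf g) = (\esf g)^2$, then vanishes, forcing $g = \esf g = g \circ \phi$ a.e.\ for every Borel $\sigma$; specializing $\sigma = [0,r]$ over rational $r \geq 0$ gives $\hsf_\phi = \hsf_\phi \circ \phi$ a.e., hence (i) by Proposition \ref{quasi}. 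For (ii)$\Rightarrow$(i), applying (ii) both to $\sigma$ and to $\rbb_+ \setminus \sigma$ and expanding $(1 - \chi_\sigma\circ\hsf_\phi\circ\phi)(1 - \chi_\sigma\circ\hsf_\phi)$ collapses to $\chi_\sigma \circ \hsf_\phi = \chi_\sigma \circ \hsf_\phi \circ \phi$ a.e., and the remainder proceeds as above.

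For (v)$\Rightarrow$(i), the relations $\hsf_{\phi^2} = \hsf_\phi \cdot \hsf_\phi$ and $\hsf_{\phi^3} = \hsf_{\phi^2}\hsf_\phi$ combined with Lemma \ref{hf2} give first $\esf(\hsf_\phi) = \hsf_\phi \circ \phi$ and then $\esf(\hsf_\phi^2) = (\hsf_\phi \circ \phi)^2 = \esf(\hsf_\phi)^2$; the same variance identity forces $\hsf_\phi = \esf(\hsf_\phi) = \hsf_\phi \circ \phi$ a.e. For (vi)$\Rightarrow$(i), Lemma \ref{hf2} applied to the first equation of (vi) yields $\hsf_{\phi^2} = \hsf_\phi^2$, and the $n=2$ instance of the second equation then produces $\esf(\hsf_\phi^2) = \esf(\hsf_\phi)^2$, after which the argument closes as above. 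The main obstacle is the legitimacy of the conditional-variance argument: it requires $\esf(\hsf_\phi)$ to be finite a.e.\ so that the pointwise expansion of $(\hsf_\phi - \esf\hsf_\phi)^2$ is valid; this follows from $\hsf_\phi < \infty$ a.e.\ (equivalent to dense definedness) together with nonsingularity of $\phi$, which force $\hsf_\phi \circ \phi < \infty$ a.e.
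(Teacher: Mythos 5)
Your argument is correct, and it reaches the two substantive implications by a genuinely different and more elementary route than the paper. For (i)$\Rightarrow$(v) the paper builds the $\phi^{-1}(\ascr)$-measurable family $P(x,\sigma)=\chi_{\sigma}(\hsf_{\phi}(\phi(x)))$, verifies the consistency condition \eqref{consist10.6}, and identifies $\hsf_{\phi^n}$ as its $n$-th moment via Theorem \ref{sms}; your induction --- $\hsf_{\phi^n}=\hsf_{\phi}^n$ a.e.\ together with $\hsf_{\phi}=\hsf_{\phi}\circ\phi$ a.e.\ gives $\esf(\hsf_{\phi^n})=\hsf_{\phi^n}\circ\phi$, whence Lemma \ref{hf2} produces the next power --- replaces all of that machinery. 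In the reverse direction the paper proves (v)$\Rightarrow$(iii) by a page of analysis (the integral bounds \eqref{szyb}, Weierstrass approximation on $[0,k]$, and Lemma \ref{2miary} on a semi-algebra of intervals), whereas you pass from $\esf(\hsf_{\phi}^2)=\esf(\hsf_{\phi})^2$ directly to $\hsf_{\phi}=\esf(\hsf_{\phi})=\hsf_{\phi}\circ\phi$ by the conditional-variance identity, and reuse the same device for (iii)$\Rightarrow$(i) and (vi)$\Rightarrow$(i) (the paper instead routes (iii) through (ii) by an exhaustion argument). Your proof is shorter and avoids Theorem \ref{sms} and Lemma \ref{2miary} entirely; what it needs instead are the additivity and pull-out properties of $\esf$ for nonnegative functions over the $\sigma$-finite measure $\mu|_{\phi^{-1}(\ascr)}$, and the a.e.\ finiteness of $\esf(\hsf_{\phi})=\hsf_{\phi}\circ\phi$ so that the expansion of $(\hsf_{\phi}-\esf(\hsf_{\phi}))^2$ and the subsequent cancellation are legitimate --- a point you correctly flag and settle from $\hsf_{\phi}<\infty$ a.e.\ and the nonsingularity of $\phi$ (it is automatic in the bounded cases (ii) and (iii)). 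The paper's longer route has the compensating virtue of exhibiting the link between quasinormality and the consistency condition, which is the thematic thread of the article, but as a self-contained proof of the theorem your version is leaner.
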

   \begin{proof}
It follows from \cite[Lemma 6.1]{ca-hor} that
$\hsf_\phi < \infty$ a.e.\ $[\mu]$.

(i)$\Rightarrow$(iii) Since, by Proposition
\ref{quasi}, $\hsf_{\phi} = \hsf_{\phi}\circ \phi$
a.e.\ $[\mu]$, we deduce that $\chi_{\sigma} \circ
\hsf_{\phi} = \chi_{\sigma} \circ \hsf_{\phi} \circ
\phi$ a.e.\ $[\mu]$ for every $\sigma \in
\borel{\rbb_+}$, which implies (iii).

(iii)$\Rightarrow$(iv) Since each Borel function
$f\colon \rbb_+ \to \rbop$ is a pointwise limit of an
increasing sequence of nonnegative Borel simple
functions, one can show that (iii) implies (iv) by
applying the Lebesgue monotone convergence theorem as
well as the additivity and the monotone continuity of
the conditional expectation (see \eqref{CE-2}).

(iv)$\Rightarrow$(iii) Obvious.

(iii)$\Rightarrow$(ii) In view of (iii) and
\eqref{przegad}, we have
   \begin{align} \label{weneed}
\int_{\phi^{-1}(\varDelta)} \chi_{\sigma} \circ
\hsf_{\phi} \D \mu = \int_{\phi^{-1}(\varDelta)}
\chi_{\sigma} \circ \hsf_{\phi} \circ \phi \D \mu,
\quad \varDelta \in \ascr.
   \end{align}
By our assumptions on the measure $\mu$, there exists
a sequence $\{X_n\}_{n=1}^\infty \subseteq \ascr$ such
that $\{\mu(X_n)\}_{n=1}^\infty \subseteq \rbb_+$,
$X_n \nearrow X$ as $n \to \infty$ and
   \begin{align} \label{hff-0}
\hsf_{\phi} \Le k \text{ a.e.\ $[\mu]$ on $X_k$ for
every $k\in \nbb$.}
   \end{align}
Substituting $\varDelta = \hsf_{\phi}^{-1}(\sigma)
\cap X_n$ into \eqref{weneed}, we see that the
following equality holds for all $n\in\nbb$ and
$\sigma \in \borel{\rbb_+}$,
   \begin{align*}
\mu(\phi^{-1}(X_n) \cap (\hsf_{\phi} \circ
\phi)^{-1}(\sigma) \cap \hsf_{\phi}^{-1}(\sigma)) =
\mu(\phi^{-1}(X_n) \cap (\hsf_{\phi} \circ
\phi)^{-1}(\sigma)) < \infty.
   \end{align*}
Hence for all $n\in\nbb$ and $\sigma \in
\borel{\rbb_+}$,
   \begin{align*}
\chi_{\phi^{-1}(X_n)} \cdot \chi_{(\hsf_{\phi} \circ
\phi)^{-1}(\sigma)} \cdot
\chi_{\hsf_{\phi}^{-1}(\sigma)} =
\chi_{\phi^{-1}(X_n)} \cdot \chi_{(\hsf_{\phi} \circ
\phi)^{-1}(\sigma)} \text{ a.e.\ $[\mu]$.}
   \end{align*}
Since $\phi^{-1}(X_n) \nearrow X$ as $n \to \infty$,
we get (ii).

(ii)$\Rightarrow$(i) Substituting $\rbb_+\setminus
\sigma$ into (ii) in place of $\sigma$, we get
   \begin{align*}
\chi_{\sigma} \circ \hsf_{\phi} = \chi_{\sigma} \circ
\hsf_{\phi} \circ \phi \cdot \chi_{\sigma} \circ
\hsf_{\phi} \overset{\mathrm{(ii)}} = \chi_{\sigma}
\circ \hsf_{\phi} \circ \phi \text{ a.e.\ $[\mu]$ for
every $\sigma \in \borel{\rbb_+}$.}
   \end{align*}
Applying the standard measure-theoretic argument, we
deduce that $f \circ \hsf_{\phi} = f \circ \hsf_{\phi}
\circ \phi$ a.e.\ $[\mu]$ for every Borel function $f
\colon \rbb_+ \to \rbop$. Substituting $f(t)=t$, $t
\in \rbb_+$, we see that $\hsf_{\phi} =
\hsf_{\phi}\circ \phi$ a.e.\ $[\mu]$. By Proposition
\ref{quasi}, this yields (i).

Summarizing, we have proved that the conditions (i) to
(iv) are equivalent.

(v)$\Rightarrow$(vi) Since $\hsf_{\phi^2} =
\hsf_{\phi}^2$ a.e.\ $[\mu]$, we infer from Lemma
\ref{hf2} that $\esf(\hsf_{\phi}) = \hsf_{\phi} \circ
\phi$ a.e.\ $[\mu]$. Clearly, $\hsf_{\phi^{n+1}} =
\hsf_{\phi}^n \cdot \hsf_{\phi} = \hsf_{\phi^n} \cdot
\hsf_{\phi}$ a.e.\ $[\mu]$ for every $n\in \zbb_+$.
Applying Lemma \ref{hf2} again, we deduce that
   \begin{align*}
\esf(\hsf_{\phi}^n) = \esf(\hsf_{\phi^n}) =
\hsf_{\phi^n} \circ \phi = (\hsf_{\phi} \circ \phi)^n
= \esf(\hsf_{\phi})^n \text{ a.e.\ $[\mu]$ for every
$n \in \zbb_+$.}
   \end{align*}

(vi)$\Rightarrow$(v) Plainly, the equality
$\hsf_{\phi^n} = \hsf_{\phi}^n$ a.e.\ $[\mu]$ holds
for $n=0$. Suppose that it is valid for a fixed $n\in
\zbb_+$. Then
   \begin{align*}
\esf(\hsf_{\phi^n}) = \esf(\hsf_{\phi})^n =
\hsf_{\phi}^n \circ \phi = \hsf_{\phi^n} \circ \phi
\text{ a.e.\ $[\mu]$.}
   \end{align*}
This together with Lemma \ref{hf2} gives
   \begin{align*}
\hsf_{\phi^{n+1}} = \hsf_{\phi^n} \cdot \hsf_{\phi} =
\hsf_{\phi}^n \cdot \hsf_{\phi} = \hsf_{\phi}^{n+1}
\text{ a.e.\ $[\mu]$.}
   \end{align*}

(i)$\Rightarrow$(v) Without loss of generality, we may
assume that $\hsf_{\phi}(x) < \infty$ for all $x\in
X$. Note that $\hsf_{\phi} \circ \phi > 0$ a.e.\
$[\mu]$ (because $\phi^{-1}(\nsf_\phi) = \{x \in X
\colon \hsf_\phi (\phi (x))=0\}$ and
$\mu(\phi^{-1}(\nsf_\phi)) = \int_X \chi_{\nsf_\phi}
\circ \phi \D \mu = \int_X \chi_{\nsf_\phi} \hsf_\phi
\D \mu = 0$ with $\nsf_\phi:=\{x \in X \colon
\hsf_\phi(x)=0\}$). Hence, by Proposition \ref{quasi},
we have $\hsf_{\phi} = \hsf_{\phi} \circ \phi > 0$
a.e.\ $[\mu]$. Let $P\colon X \times \borel{\rbb_+}
\to [0,1]$ be the $\phi^{-1}(\ascr)$-measurable family
of probability measures defined by
   \begin{align}  \label{quas1}
P(x,\sigma) = \chi_{\sigma}(\hsf_{\phi}(\phi(x))),
\quad x\in X, \sigma \in \borel{\rbb_+}.
   \end{align}
Then $P$ satisfies \eqref{consist10.6} (see e.g., the
proof of \cite[Proposition 3.6]{b-j-j-sD}). Hence, by
Theorem \ref{sms}, we see that for every $n\in
\zbb_+$,
   \begin{align*}
\hsf_{\phi^n}(x) = \int_{\rbb_+} t^n P(x,\D t)
\overset{\eqref{quas1}}= (\hsf_{\phi} \circ \phi)^n
(x) = \hsf_{\phi}^n (x) \text{ for $\mu$-a.e.\ $x\in
X$.}
   \end{align*}

(v)$\Rightarrow$(iii) Let $\{X_n\}_{n=1}^\infty$ be as
in the proof of (iii)$\Rightarrow$(ii). By
\eqref{hff-0} and the nonsingularity of $\phi$, we
have
   \begin{align} \label{wineq}
\text{$\hsf_{\phi} \circ \phi \Le k$ a.e.\ $[\mu]$ on
$\phi^{-1}(X_k)$ for every $k\in \nbb$.}
   \end{align}
Now we show that
   \begin{align} \label{hff-1}
\hsf_{\phi} \Le k \text{ a.e.\ $[\mu]$ on
$\phi^{-1}(X_k)$ for every $k \in \nbb$.}
   \end{align}
For this, note that by (v) and the measure transport
theorem
   \begin{multline} \label{wzor}
\int_{\phi^{-1}(\varDelta)} \hsf_{\phi}^n \D \mu =
\int_X \chi_{\varDelta} \circ \phi \cdot \hsf_{\phi^n}
\D \mu = \int_{X} \chi_{\varDelta} \circ \phi^{n+1} \D
\mu
   \\
= \int_{\varDelta} \hsf_{\phi^{n+1}} \D \mu =
\int_{\varDelta} \hsf_{\phi}^{n+1} \D \mu, \quad
\varDelta \in \ascr, \, n\in \nbb.
   \end{multline}
Substituting $\varDelta = X_k$ into \eqref{wzor} and
using \eqref{hff-0}, we obtain
   \begin{align} \label{szyb}
\Big(\int_{\phi^{-1}(X_k)} \hsf_{\phi}^n \D
\mu\Big)^{1/n} \Le (k^{n+1} \mu(X_k))^{1/n}, \quad k,n
\in \nbb.
   \end{align}
By \cite[p.\ 95, Problem 9]{Ash}, this implies
\eqref{hff-1}.

It follows from (vi) that $\esf(\hsf_{\phi^n}) =
\hsf_{\phi}^n \circ \phi$ a.e.\ $[\mu]$ for all $n\in
\zbb_+$. Hence, by \eqref{przegad},
   \begin{align} \label{goni}
\int_{\phi^{-1}(\varDelta)} \hsf_{\phi^n} \D \mu =
\int_{\phi^{-1}(\varDelta)} \hsf_{\phi}^n \circ \phi
\D \mu
   \end{align}
for all $\varDelta \in \ascr$ and $n \in \zbb_+$. Fix
$k\in \nbb$ and $\varDelta \in \ascr$ such that
$\varDelta \subseteq X_k$. In view of \eqref{wineq}
and \eqref{hff-1}, there exists $E \in \ascr$ such
that $E \subseteq \phi^{-1}(\varDelta)$,
$\mu(\phi^{-1}(\varDelta) \setminus E)=0$ and
   \begin{align} \label{hff-2}
\hsf_{\phi}(x), \hsf_{\phi}(\phi(x)) \in [0,k] \quad
\text{for every $x\in E$.}
   \end{align}
By \eqref{szyb}, both sides of \eqref{goni} are finite
for every $n\in \zbb_+$. Therefore, we have
   \begin{align} \label{pph}
\int_{E} p \circ \hsf_{\phi} \D \mu = \int_{E} p \circ
\hsf_{\phi} \circ \phi \D \mu, \quad p \in \cbb[z],
   \end{align}
where $\cbb[z]$ is the ring of all complex polynomials
in variable $z$. Let $f\colon [0,k] \to \cbb$ be a
continuous function. By Weierstrass theorem, there
exists a sequence $\{p_n\}_{n=1}^\infty \subseteq
\cbb[z]$ which is convergent uniformly to $f$ on
$[0,k]$. Then, by \eqref{hff-2}, both suprema
$\sup_{E}|f \circ \hsf_{\phi} \circ \phi - p_n \circ
\hsf_{\phi} \circ \phi|$ and $\sup_{E}|f \circ
\hsf_{\phi} - p_n \circ \hsf_{\phi}|$ are less than or
equal to $\sup_{[0,k]}|f - p_n|$ for every $n\in
\nbb$. Since, by \eqref{hff-0}, $\mu(E) \Le
\mu(\phi^{-1}(X_k))< \infty$, we deduce that $\int_{E}
p_n \circ \hsf_{\phi} \circ \phi \D \mu$ tends to
$\int_{E} f \circ \hsf_{\phi} \circ \phi \D \mu$ as $n
\to \infty$, and $\int_{E} p_n \circ \hsf_{\phi} \D
\mu$ tends to $\int_{E} f \circ \hsf_{\phi} \D \mu$ as
$n \to \infty$. Hence, by \eqref{pph}, for every
continuous function $f\colon [0,k] \to \cbb$,
   \begin{align} \label{pph2}
\int_{E} f \circ \hsf_{\phi} \D \mu = \int_{E} f \circ
\hsf_{\phi} \circ \phi \D \mu.
   \end{align}
Take an interval $J=[a,b)$ with $a,b \in \rbb_+$. Then
there exists a sequence of continuous functions $f_n
\colon [0,k] \to [0,1]$, $n \in \nbb$, which converges
to $\chi_{J\cap [0,k]}$ pointwise. Therefore, by
\eqref{hff-2}, $f_n \circ \hsf_{\phi} \circ \phi$
tends to $\chi_{J\cap [0,k]} \circ \hsf_{\phi} \circ
\phi$ pointwise on $E$ as $n\to \infty$, and $f_n
\circ \hsf_{\phi}$ tends to $\chi_{J\cap [0,k]} \circ
\hsf_{\phi}$ pointwise on $E$ as $n\to \infty$. This
combined with \eqref{pph2} and the Lebesgue dominated
convergence theorem shows that the equality
   \begin{align}  \label{nmc}
\int_{\phi^{-1}(\varDelta)} \chi_\sigma \circ
\hsf_{\phi} \D \mu = \int_{\phi^{-1}(\varDelta)}
\chi_\sigma \circ \hsf_{\phi} \circ \phi \D \mu
   \end{align}
holds for $\sigma=J \cap [0,k]$. Applying Lemma
\ref{2miary} to the Borel measures on $[0,k]$ (with
respect to $\sigma$) defined by the left-hand and the
right-hand sides of \eqref{nmc}, and to the
semi-algebra $\pscr = \{[a,b)\cap [0,k]\colon a,b\in
\rbb_+\}$, we see that \eqref{nmc} holds for every
Borel set $\sigma \subseteq [0,k]$.

In view of the above, if $\varDelta \in \ascr$ and
$\sigma \in \borel{\rbb_+}$, then by the Lebesgue
monotone convergence theorem and the fact that $X_k
\nearrow X$ as $k \to \infty$, we have
   \begin{multline*}
\int_{\phi^{-1}(\varDelta)} \chi_\sigma \circ
\hsf_{\phi} \D \mu = \lim_{k \to \infty}
\int_{\phi^{-1}(\varDelta \cap X_k)} \chi_{\sigma \cap
[0,k]} \circ \hsf_{\phi} \D \mu
   \\
= \lim_{k \to \infty} \int_{\phi^{-1}(\varDelta \cap
X_k)} \chi_{\sigma \cap [0,k]} \circ \hsf_{\phi} \circ
\phi \D \mu = \int_{\phi^{-1}(\varDelta)} \chi_\sigma
\circ \hsf_{\phi} \circ \phi \D \mu,
   \end{multline*}
which together with \eqref{przegad} implies (iii).
This completes the proof.
   \end{proof}
   \begin{rem}
Regarding Theorem \ref{chquas}, it is worth pointing
out that it may happen that the equalities
$\hsf_{\phi^n} = \hsf_{\phi}^n$ a.e.\ $[\mu]$, $n\in
\zbb_+$, are satisfied though the equality
$\hsf_{\phi} = \hsf_{\phi} \circ \phi$ a.e.\ $[\mu]$
is not. This shows that the assumption
$\overline{\dz{C_{\phi}}} = L^2(\mu)$ in Theorem
\ref{chquas} is essential. To demonstrate this, set
$X=\zbb_+$ and $\ascr=2^X$. Let $\mu$ be the counting
measure on $\ascr$. Define the nonsingular
transformation $\phi$ of $X$ by $\phi(x) = 0$ for
$x\in X$. It is easily seen that $\hsf_{\phi^n}(x) =
0$ if $x \in X \setminus \{0\}$ and $\hsf_{\phi^n}(0)
= \infty$ for every $n\in \nbb$. This implies that
$\hsf_{\phi^n}(x) = \hsf_{\phi}(x)^n$ for all $x\in X$
and $n\in \zbb_+$. However, $\hsf_{\phi} \circ \phi(x)
= \infty$ for every $x\in X$, and $\hsf_{\phi}(x) = 0$
for every $x \in X \setminus \{0\}$.
   \end{rem}
   \bibliographystyle{amsalpha}
   
   \end{document}